\numberwithin{equation}{section}
\DeclareMathOperator{\tr}{tr}
\theoremstyle{plain}
\theoremstyle{plain}
\newtheorem{theorem}{Theorem}[section]
\theoremstyle{plain}
\newtheorem{lemma}{Lemma}[section]
\theoremstyle{plain}
\theoremstyle{plain}
\theoremstyle{remark}
\newtheorem{remark}{Remark}[section]
\theoremstyle{remark}
\newcommand{\E}{\mathbb{E}}
\newcommand{\R}{\mathbb{R}}
\newcommand{\C}{\mathbb{C}}
\begin{document}

\def\shorttitle{Bulk behaviour of skew-symmetric patterned random matrices}
 \def\shortauthors{Arup Bose and Soumendu Sundar Mukherjee}
 \fancyhf{}
 \renewcommand\headrulewidth{0pt}
 \fancyhead[C]{
 \ifodd\value{page}
   \small\scshape\shortauthors
 \else
   \small\scshape\shorttitle
 \fi
 }
 \fancyhead[L]{
   \ifodd\value{page}
     \thepage
   \fi
   }
 \fancyhead[R]{
   \ifodd\value{page}
   \else
      \thepage
     \fi
     }
 \pagestyle{fancy}
  
\title{\textbf{\LARGE \sc
Bulk behaviour of skew-symmetric patterned random matrices}}
 \author{
 \parbox[t]{0.50\textwidth}{{\sc Arup Bose}
 \thanks{Research  supported by J.C.~Bose National Fellowship, Dept.~of Science and Technology, Govt.~of India.}
 \\ {\small Statistics and Mathematics Unit\\ Indian Statistical Institute\\ 203 B. T.~Road, Kolkata 700108\\ INDIA\\  E-mail: \texttt{bosearu@gmail.com}\\}}
\parbox[t]{0.50\textwidth}{{\sc Soumendu Sundar Mukherjee}
  \\ {\small Master of Statistics student\\ Indian Statistical Institute\\ 203 B. T.~Road, Kolkata 700108\\ INDIA\\  E-mail: \texttt{soumendu041@gmail.com}\\}}}

\date{This version February 14, 2014}

\maketitle
\begin{abstract}
Limiting Spectral Distributions (LSD) of real symmetric patterned matrices have been well-studied. In this article, we consider skew-symmetric/anti-symmetric patterned random matrices and establish the LSDs of several common matrices. For the skew-symmetric Wigner, skew-symmetric Toeplitz and the skew-symmetric Circulant, the LSDs (on the imaginary axis) are the same as those in the symmetric cases. For the skew-symmetric Hankel and the skew-symmetric Reverse Circulant however, we obtain new LSDs. We also show the existence of the LSDs for the triangular versions of these matrices. 

 We then introduce a related modification of the symmetric matrices by changing the sign of the lower triangle part of the matrices. In this case, the modified Wigner, modified Hankel and the modified Reverse Circulants have the same LSDs as their usual symmetric counterparts while new LSDs are obtained for the modified Toeplitz and the modified Symmetric Circulant.  
\end{abstract}
   \medskip

\noindent \textbf{Key words and phrases.}  Patterned matrices, limiting spectral distribution, skew-symmetric Toeplitz, Wigner, Hankel, Circulant matrices, semi-circular law.\medskip

\noindent{\bf AMS 2010 Subject Classifications.} Primary 15B52, 60B20; secondary 60B10, 60F99, 60B99.\bigskip
\bigskip

\section{Introduction} 
Suppose $A_n$ is an  $n\times n$ 
matrix with eigenvalues $\lambda_1, \ldots , \lambda_n$.
The empirical spectral measure $\mu_n$ of $A_n$ is the
random
measure 
\begin{equation}
\mu_n= \frac{1}{n} \sum_{i=1}^n \delta_{ \lambda_i},
\end{equation}
where $\delta_{x}$ is the Dirac delta measure at $x$. The
corresponding random
probability distribution function is known as the \emph{Empirical
Spectral Distribution} (ESD) and is denoted by
$F^{A_n}$.

The sequence $\{F^{A_n}\}$ is said to converge (weakly) almost surely to a non-random 
distribution function $F$ if, outside a null set, as $n \to \infty$, $F^{A_n}(\cdot) \rightarrow F(\cdot)$ at all
continuity points of $F$. $F$ is known as the \emph{Limiting Spectral Distribution} (LSD). 

There has been a lot of recent work on obtaining the LSDs of
large dimensional patterned random matrices. These matrices may be defined as follows. Let $\{ x_i; i \geq 0\}$ be a sequence of random variables, called an  \textit{input sequence}. Let 
 $\mathbb Z$ be the set of all integers and let 
 $\mathbb Z_{+}$ be the set of all non-negative integers. Let
\begin{equation}\label{eq:link}
L_n: \{1, 2, \ldots n\}^2 \to \mathbb Z \ \text{(or} \ \ \mathbb Z^2) \  n \geq 1, 
\end{equation} be a sequence of functions. We shall
write $L_n=L$ and call it the \textit{link} function and by abuse of
notation we write $\mathbb Z_{+}^2$ as the common domain of $\{ L_n
\}$. Matrices  of the form
\begin{equation}\label{eq:patternmatrix}
A_n=n^{-1/2}((x_{L(i,j)}))_{1\leq i, j \leq n}
\end{equation}
are called \emph{patterned matrices}. If
$L(i,j)=L(j,i)$ for all $i,j$, then the matrix is symmetric. In this article, we shall denote the LSD of $\{n^{-1/2}A_n\}$, if it exists, by $\mathcal{L}_{A}$. 

The symmetric patterned matrices that have received particular attention in the literature are the Wigner, Toeplitz, Hankel,  Reverse Circulant and the Symmetric Circulant matrices. Their link functions are given in Table \ref{table1}. 
\begin{table}[htbp]
\centering
\begin{tabular}{|c|c|c|}
\hline
Matrix & Notation & Link function\\
\hline
Wigner & $W_n$ & $L_W(i,j)=(\min\{i,j\},\max\{i,j\})$\\
Toeplitz & $T_n$ &$L_T(i,j)=|i-j|$\\
Hankel & $H_n$ & $L_H(i,j)=i+j$\\
Symmetric Circulant & $SC_n$ & $L_{SC}(i,j)=\frac{n}{2}-|\frac{n}{2}-|i-j||$\\
Reverse Circulant & $RC_n$ & $L_{RC}(i,j)=(i+j)(\text{mod } n)$\\
\hline
\end{tabular}
\vskip10pt
\caption{Some common symmetric patterned matrices and their link functions.}
\label{table1}
\end{table}

While the LSDs of the Wigner, Reverse Circulant and the Symmetric Circulant are known explicitly, very little is known about the LSDs of the Hankel and the Toeplitz. LSD existence is also known for the upper triangular versions of these matrices, though the nature of these limits is not known.

The  LSD of the non-symmetric Wigner (the i.i.d.~matrix) is the circular law (uniform measure on the unit disc in $\C$) and for the Circulant matrix the LSD is  bivariate Gaussian. It is not known whether LSDs exist for non-symmetric Toeplitz and Hankel matrices, even though simulation evidence is positive. See \citet{bryc2006spectral,bose2008another}.
It appears to be difficult to establish the LSD for these non-symmetric matrices. 

In this article, we study the existence of the LSDs of skew-symmetric/anti-symmetric patterned matrices. (In the Physics literature the term ``anti-symmetric'' is more common. Technically, if S is a skew-symmetric matrix, then $iS$ is called an anti-symmetric matrix, where $i$ is the imaginary unit. Note that $iS$ is Hermitian.) Anti-symmetric Gaussian matrices appeared in the classic work of \citet{mehta2004random} who, among other things, gave an expression for the joint distribution of the eigenvalues. Singular values of skew-symmetric Gaussian Wigner matrices are useful in Statistics too, e.g., in paired comparisons model (see \citet{kuriki1993orthogonally,kuriki2010distributions}). Recently, \citet{dumitriu2010tridiagonal} obtained tridiagonal realizations of anti-symmetric Gaussian $\beta$-ensembles.  

We first establish the existence of the LSDs of several real skew-symmetric patterned random matrices and identify the limits in some cases. For the skew-symmetric Wigner, skew-symmetric Toeplitz and the skew-symmetric Circulant, the LSDs (on the imaginary axis) are the same as those in the symmetric cases. However, for the skew-symmetric Hankel and the skew-symmetric Reverse Circulant, we obtain new LSD. We also show the existence of the LSDs for the triangular versions of these matrices (introduced in \citet{basu2012spectral}).

We also introduce a related modification of the symmetric matrices by changing the sign of the lower triangle part of the matrices. In this case, the modified Wigner, modified Hankel and the modified Reverse Circulant have the same LSD as their usual symmetric counterparts whereas new LSD are obtained for the modified Toeplitz and the modified Symmetric Circulant.

\section{Preliminaries}
We shall use the method of moments to establish the existence of the LSD. For any matrix $A$, let $\beta_h(A)$ denote the $h$-th moment of the ESD of $A$. 
We quote the following lemma which is easy to prove.
\begin{lemma}\label{mainlemma}
Let $\{A_n\}$ be a sequence of random matrices with all real eigenvalues. Suppose there exists a sequence $\{\beta_h\}$ such that\vskip5pt 

\noindent 
(i)  for every $h\geq 1$, $\E(\beta_h(A_n))\rightarrow \beta_h$, \vskip5pt

\noindent (ii)  $\sum_{n=1}^{\infty}\E[\beta_h(A_n)-\E(\beta_h(A_n))]^4<\infty$ for every $h\geq 1$ and \vskip5pt 

\noindent 
(iii) the sequence $\{\beta_h\}$ satisfies Carleman's condition,  $\sum\beta_{2h}^{-1/2h}=\infty$. \vskip5pt

Then  the LSD of $F^{A_n}$ exists and equals $F$ with moments $\{\beta_h\}$.
\end{lemma}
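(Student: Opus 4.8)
The plan is to run the classical method of moments, in which almost-sure convergence of the empirical moments is upgraded to almost-sure weak convergence of the ESDs by means of Carleman's condition. The argument rests on three standard ingredients: Markov's inequality combined with the first Borel--Cantelli lemma, a countable-intersection argument, and the Fréchet--Shohat moment convergence theorem. Throughout, the hypothesis that each $A_n$ has only real eigenvalues is what guarantees that $\beta_h(A_n)$ are genuine moments of a probability measure on $\R$, so the relevant moment problem is a Hamburger problem on the real line.

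First I would fix $h\geq 1$ and set $Y_n=\beta_h(A_n)-\E(\beta_h(A_n))$. By Markov's inequality, for every $\epsilon>0$ we have $\mathbb{P}(|Y_n|>\epsilon)\leq \epsilon^{-4}\,\E[Y_n^4]$. Summing over $n$ and invoking hypothesis (ii), the series $\sum_n \mathbb{P}(|Y_n|>\epsilon)$ converges, so by Borel--Cantelli the event $\{|Y_n|>\epsilon\}$ occurs only finitely often, almost surely. Letting $\epsilon$ run through the sequence $1/k$, $k\geq 1$, and intersecting the corresponding full-probability events yields $Y_n\to 0$ almost surely. Combining this with hypothesis (i), which gives $\E(\beta_h(A_n))\to\beta_h$, we obtain $\beta_h(A_n)\to\beta_h$ almost surely, for this fixed $h$.

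Next I would intersect over $h$. For each $h\in\N$ the event $\{\beta_h(A_n)\to\beta_h\}$ has probability one; since $\N$ is countable, the intersection $\Omega_0$ of these events still has probability one. On $\Omega_0$, \emph{every} moment of $F^{A_n}$ converges to the corresponding $\beta_h$ simultaneously. Finally, on $\Omega_0$ I would apply the moment convergence theorem. Hypothesis (iii) is precisely Carleman's condition, which guarantees that the sequence $\{\beta_h\}$ determines a \emph{unique} distribution function $F$, i.e. the moment problem is determinate. The Fréchet--Shohat theorem then asserts that convergence of all moments of $F^{A_n}$ to those of a determinate $F$ forces $F^{A_n}\to F$ weakly at every continuity point of $F$. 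As this holds on the full-probability event $\Omega_0$, we conclude that $F^{A_n}\to F$ weakly almost surely, so $F$ is the LSD and its moments are $\{\beta_h\}$.

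There is no deep obstacle here; the lemma is genuinely routine, as the authors note. If I had to flag the one point requiring care, it is the interchange of the quantifiers ``for every $h$, almost surely'' and ``almost surely, for every $h$,'' which is legitimate only because it involves a countable family; the other conceptual input is that determinacy via Carleman's condition is exactly what turns moment convergence (in general only necessary) into a sufficient criterion for weak convergence.
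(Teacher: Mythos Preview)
Your argument is correct and is exactly the standard route: Borel--Cantelli via the fourth-moment summability to upgrade convergence in mean to almost-sure convergence of each moment, a countable intersection to make this simultaneous in $h$, and then determinacy from Carleman's condition together with the Fr\'echet--Shohat theorem to pass from moment convergence to weak convergence. The paper does not actually give a proof of this lemma; it simply quotes it as ``easy to prove,'' so there is nothing to compare against beyond noting that your write-up supplies precisely the routine argument the authors had in mind.
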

To prove the existence of any LSD, we shall make use of the general notation and theory developed in \citet{bose2008another} for patterned matrices. 
First observe that all the link functions satisfy the so called 
Property B: the total number of times any particular variable appears in any row is uniformly bounded. Moreover, the total number of different variables in the matrix and the total number of times any variable appears in the matrix are both of the order $n$. This implies that the general theory applies to this class of link functions.

We shall consider the following assumptions on the input random variables.
\vskip10pt

\noindent {\bf (A1).} The input random variables are independent and uniformly bounded with mean $0$, and variance $1$.\vskip10pt
\noindent {\bf (A2).} The input random variables are i.i.d. with mean $0$ and variance $1$.\vskip10pt
\noindent {\bf (A3).} The input random variables are independent with mean $0$ and variance $1$, and with uniformly bounded moments of all orders.\vskip10pt

In particular,  if the LSD exists under Assumption (A1), then the same LSD continues to hold under Assumptions (A2) or (A3). Thus in our arguments, without loss of any generality, Assumption (A1) is assumed to hold.  
Traditionally, LSD results are stated under Assumption (A1) and Assumption (A3) is appropriate while studying the joint convergence of more than one sequence of matrices.

The \emph{Moment-Trace Formula} plays a key role in this approach. A function $$\pi : \{0, 1, \cdots, h\} \rightarrow \{1,2,\cdots,n\}$$ with $\pi(0) = \pi(h)$ is  called a \emph{circuit} 
of
length $h$. The dependence of a circuit on $h$ and $n$ is suppressed. Then
\begin{equation}
\beta_h(A)= \frac{1}{n}\tr(A^h)=\frac{1}{n}\sum_{\pi \text{ circuit of length $h$}}a_\pi,
\end{equation}
where 
\[
a_{\pi} := a_{L(\pi(0),\pi(1))}a_{L(\pi(1),\pi(2))}\ldots a_{L(\pi(h-1),\pi(h))}.
\]

If $L(\pi(i-1),\pi(i))=L(\pi(j-1),\pi(j))$, with $i<j$, we shall use the notation $(i,j)$ to denote such a match of the $L$-values.
From the general theory, it follows that  circuits where there are only pair-matches are relevant when computing limits of moments. 

Two circuits $\pi_1$ and $\pi_2$ are equivalent if and only if their $L$-values respectively match at the
same locations, i.e., if for all $i, j$,
\[
L(\pi_1(i-1),\pi_1(i))=L(\pi_1(j-1),\pi_1(j)) \Leftrightarrow L(\pi_2(i-1),\pi_2(i))=L(\pi_2(j-1),\pi_2(j)).
\]

Any equivalence class can be indexed by a partition of $\{1,2,\cdots, h\}$. We label these partitions by \emph{words} of length $h$ of letters where the first occurrence
of each letter is in alphabetical order. For example, if $h = 4$ then the partition
$\{\{1,3\},\{2,4\}\}$ is represented by the word $abab$. This identifies all circuits $\pi$ for
which $L(\pi(0),\pi(1)) = L(\pi(2), \pi(3))$ and $L(\pi(1),\pi(2)) = L(\pi(3),\pi(1))$.
Let $w[i]$ denote the $i$-th entry of $w$. The equivalence class corresponding to $w$ is 
\[
\Pi(w) := \{\pi \mid w[i]=w[j]\Leftrightarrow L(\pi(i-1),\pi(i))=L(\pi(j-1),\pi(j))\}.
\]
By varying $w$, we obtain all the equivalence classes. It is important to note that for any
fixed $h$, even as $n\rightarrow \infty$, the number of words (equivalence classes) remains finite but
the number of circuits in any given $\Pi(w)$ may grow indefinitely. Henceforth we shall denote the set of all words of length $h$ by $\mathcal{A}_{h}$. 

Notions of matches carry over to words. A word is \emph{pair-matched} if every letter 
appears exactly twice in that word. The set of all pair-matched words of length $2k$
is  denoted by $\mathcal{W}_{2k}$.
For technical reasons it is often easier to deal with a class larger than $\Pi(w)$:
\[
\Pi^*(w) = \{\pi \mid w[i]=w[j]\Rightarrow L(\pi(i-1),\pi(i))=L(\pi(j-1),\pi(j))\}.
\]

Any $i$ (or $\pi(i)$ by abuse of notation) is a \emph{vertex}. It is \emph{generating} if either $i = 0$
or $w[i]$ is the first occurrence of a letter. Otherwise, it is called non-generating. For
example, if $w = abbcab$ then $\pi(0), \pi(1), \pi(2), \pi(4)$ are generating and $\pi(3), \pi(5), \pi(6)$
are non-generating. The set of generating vertices (indices) is denoted by $S$. By Property B, a circuit is completely determined, up to finitely many choices, by its generating vertices.

Note that from the general theory it follows that the LSD exists if for each $w \in \mathcal{W}_{2k}$, the following limit exists:
$$p(w)=\lim n^{-(k+1)}\# \Pi^*(w).$$

\section{A unified framework for real skew-symmetric matrices}
If $A$ is an $n\times n$ skew-symmetric matrix, then all its eigenvalues $\{\lambda_j\}$ are purely imaginary (and has one zero eigenvalue when $n$ is odd), and every eigenvalue occurs in conjugate pairs.
Therefore one can \emph {define} an empirical spectral distribution of $A$ on $\R$ as 
\begin{equation}
F^A(x):=\frac{1}{n}\sum_{j=1}^{n}\mathbf{1}_{(i\lambda_j \leqslant x)}
\end{equation}
and moreover,  $F^A$ is a symmetric distribution. 
Therefore, in order to apply the moment method, it suffices to deal with only the even moments. Note that
\begin{align*}
\beta_{2k}(A) & = \int x^{2k} \, dF^A(x) \\
			& = \frac{1}{n}\sum_{j=1}^{n}(i\lambda_j)^{2k} \\
			& = (-1)^k \frac{1}{n}\sum_{j=1}^{n} \lambda_j^{2k} \\
			& = (-1)^k \frac{1}{n}\tr(A^{2k}). 
\end{align*}

Let $\{A_n\}$ be a sequence of $n\times n$ patterned random matrices with the symmetric link function $L$.  
Let   
\begin{equation}
s_{ij}=(1-\delta_{ij})(-1)^{\mathbf{1}_{(i>j)}},
\end{equation}
where $\delta_{ij}$ is the Kronecker-delta.
Let $S_n=((s_{ij}))$ be the $n \times n$ matrix 
\begin{equation}
S_n=
\begin{pmatrix}
0 & 1 & \hdots & 1 \\
-1 & 0 & \hdots & 1 \\
\vdots & \vdots & \ddots & \vdots \\
-1 & -1 & \hdots & 0
\end{pmatrix}_{n\times n}.
\end{equation}
Then we can construct $\widetilde{A}_n$, the skew-symmetric version of $A_n$ by
\begin{equation}
\widetilde{A}_n=S_n\odot A_n,
\end{equation}
where $\odot$ denotes the Schur-Hadamard/entrywise product.

We shall without loss of generality assume that (A1) holds. The moment-trace formula for $\widetilde{A}_n$ may be written as
\begin{equation}
\beta_{2k}(n^{-1/2}\widetilde{A}_n)=(-1)^k\frac{1}{n^{1+k}}\sum_{\pi \ \text{circuit of length $2k$}} s_{\pi} a_{\pi}.
\end{equation}
Therefore
\begin{equation}\label{eq:emt1}
\E\beta_{2k}(n^{-1/2}\widetilde{A}_n)=(-1)^k\frac{1}{n^{1+k}}\sum_{\pi \ \text{circuit of length $2k$}} s_{\pi}\E a_{\pi}.
\end{equation}
Using the concept of words we may rewrite (\ref{eq:emt1}) as
\begin{equation}\label{eq:emt2}
\E\beta_{2k}(n^{-1/2}\widetilde{A}_n)=(-1)^k\frac{1}{n^{1+k}}\sum_{w \in \mathcal{A}_{2k}}\sum_{\pi \in \Pi(w)} s_{\pi}\E a_{\pi}.
\end{equation}

Suppose $L$ satisfies Property B. Let $C^{L}_{h,3+}$ denote the set of $L$-matched $h$-circuits on $\{1,\cdots , n\}$ with at least one edge of order $\geqslant 3$. Then Lemma 1(a) of \citet{bose2008another} says that there is a constant $C$ depending on $L$ and $h$ such that
\[
\# C^{L}_{h,3+}  \leqslant C n^{\lfloor(h+1)/2\rfloor}.
\]
Combining this with the observation that $|s_{\pi}|\leqslant 1$ it is easy to see that
\begin{equation}
\lim_n\frac{1}{n^{1+k}}\sum_{\pi \in C^{L}_{2k,3+}} s_{\pi}\E a_{\pi}=0.
\end{equation}
Therefore
\begin{equation}\label{eq:emt3}
\lim_n \E\beta_{2k}(n^{-1/2}\widetilde{A}_n)=(-1)^k \lim \frac{1}{n^{1+k}}\sum_{w \in \mathcal{W}_{2k}}\sum_{\pi \in \Pi(w)} s_{\pi}\E a_{\pi}.
\end{equation}

Noting that under our assumption $\E a_{\pi}=1$ for any pair-matched circuit $\pi$, (\ref{eq:emt3}) reduces to
\begin{equation}\label{eq:emt5}
\lim \E\beta_{2k}(n^{-1/2}\widetilde{A}_n)=(-1)^k\sum_{w \in \mathcal{W}_{2k}}\lim_n\frac{1}{n^{1+k}}\sum_{\pi \in \Pi(w)} s_{\pi},
\end{equation}
\textit{provided the limits in the right side  exist}. In fact, since $\Pi^*(w)\setminus \Pi(w) \subseteq C^L_{2k,3+}$, one has
\[
\lim_n\frac{1}{n^{1+k}}\sum_{\pi \in \Pi(w)} s_{\pi}=\lim_n\frac{1}{n^{1+k}}\sum_{\pi \in \Pi^*(w)} s_{\pi},
\]
and thus one can write
\begin{equation}\label{eq:emt6}
\lim \E\beta_{2k}(n^{-1/2}\widetilde{A}_n)=(-1)^k\sum_{w \in \mathcal{W}_{2k}}\lim_n\frac{1}{n^{1+k}}\sum_{\pi \in \Pi^*(w)} s_{\pi},
\end{equation}
provided the limits exist for each $w$. If we define 
\[
p_{\widetilde{A}}(w):=(-1)^k\lim_n\frac{1}{n^{1+k}}\sum_{\pi \in \Pi(w)} s_{\pi},
\]
then (\ref{eq:emt6}) becomes
\begin{equation}\label{eq:emt7}
\lim \E\beta_{2k}(n^{-1/2}\widetilde{A}_n)=\sum_{w \in \mathcal{W}_{2k}}p_{\widetilde{A}}(w).
\end{equation}
In this context, we recall the analogous expression for symmetric matrices $A_n$ from \citet{bose2008another}:
\[
\lim \E\beta_{2k}(n^{-1/2}A_n)=\sum_{w \in \mathcal{W}_{2k}}p_{A}(w),
\]
where
\[
p_{A}(w):=\lim_n\frac{1}{n^{1+k}}\#\Pi(w)=\lim_n\frac{1}{n^{1+k}}\#\Pi^*(w)
\]
is assumed to exist for each $w\in\mathcal{W}_{2k}$.

It is not difficult to show that if the limits exist in (\ref{eq:emt6}), then  Condition (iii) of Lemma~\ref{mainlemma} follows (see Theorem~3 of \citet{bose2008another} for the argument in the symmetric case; in the skew-symmetric case too, one can use their argument verbatim because $|s_{\pi}|\leqslant 1$). In fact, the limiting moments are sub-Gaussian. The verification of Condition (ii) is also easy since 
\begin{equation*}
\prod_{j=1}^{4} \E(s_{\pi_j}a_{\pi_j}-\E s_{\pi_j}a_{\pi_j})
 =s_{\pi_1}s_{\pi_2}s_{\pi_3}s_{\pi_4}\prod_{j=1}^{4}\E(a_{\pi_j}-\E a_{\pi_j})
\end{equation*} 
and the arguments given in the proof of Lemma 2 of \citet{bose2008another} apply with minor modifications.

In the next section, we shall consider several skew-symmetric patterned matrices and show that Condition (i) of Lemma~\ref{mainlemma} holds by  arguing that (\ref{eq:emt5}) holds in each case. 

\section{Some specific matrices}
 First note that
\begin{equation}\label{eq:s}
 s_{\pi}=(-1)^{\sum_{j=1}^{2k} \mathbf{1}_{(\pi(j-1)>\pi(j))}}\prod_{j=1}^{2k}(1-\delta_{\pi(j-1),\pi(j)}).
\end{equation}
It is convenient to use some graph theoretic terminology to deal with (\ref{eq:s}).  Consider the complete directed graph $DK_n$ on $V=\{1,\cdots\,n\}$. Note that $\pi$ defines a directed circuit of length $2k$ on this graph. Call the numerical value  of each vertex its \emph{level}. Associate with each $\pi$ a marking-vector $(\epsilon_1,\cdots,\epsilon_{2k})$, where
\begin{equation}
\epsilon_j=(-1)^{\mathbf{1}_{(\pi(j-1)>\pi(j))}}(1-\delta_{\pi(j-1),\pi(j)}).
\end{equation}
Note that if a traveler moves along the circuit $\pi$, starting from $\pi(0)$, and marks each move $\pi(j-1) \rightsquigarrow \pi(j)$ by $\epsilon_j$, then moving to a higher (respectively lower)  level corresponds to a mark of $1$ (respectively $-1$) and remaining at the same level corresponds to marking with $0$. Then
\begin{equation}
s_{\pi}=\prod_{j=1}^{2k}\epsilon_j.
\end{equation}
Note that a circuit $\pi$ contains a \emph{loop} if and only if $s_{\pi}=0$.
\subsection{LSD of $n^{-1/2}\widetilde{W}_n$}
We recall the concept of \emph{Catalan} words from \citet{bose2008another}. A Catalan word of length $2$ is just a double letter $aa$. In general, a Catalan word of length $2k$, $k>1$, is a word $w\in \mathcal{W}_{2k}$ containing a double letter such that if one deletes the double letter the reduced word becomes a Catalan word of length $2k-2$.
For example, $abba$, $aabbcc$, $abccbdda$ are Catalan words whereas $abab$, $abccab$, $abcddcab$
are not. The set of all Catalan words of length $2k$ will be denoted by $\mathcal{C}_{2k}$. It is known that \begin{equation}
\#\mathcal{C}_{2k}=\frac{1}{k+1}\binom{2k}{k},
\end{equation}
the ubiquitous Catalan number from Combinatorics. It is known that $\#\mathcal{C}_{2k}$ also equals the $2k$-th moment of the semicircle law, the LSD of the Wigner matrix.
\begin{theorem}\label{thm:wig}
Suppose that the entries of the  skew-symmetric Wigner $n^{-1/2}\widetilde{W}_n$ satisfy (A1) or (A2) or (A3). Then its  LSD is the semi-circular law almost surely. 
\end{theorem}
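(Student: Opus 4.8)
The plan is to verify Condition (i) of Lemma~\ref{mainlemma} by computing $\lim_n \E\beta_{2k}(n^{-1/2}\widetilde{W}_n)$ through the word-sum (\ref{eq:emt7}) and showing that it equals $\#\mathcal{C}_{2k}$, the $2k$-th moment of the semicircle law; Conditions (ii) and (iii) have already been disposed of in the preliminaries. Everything thus reduces to evaluating $p_{\widetilde{W}}(w) = (-1)^k \lim_n n^{-(1+k)}\sum_{\pi \in \Pi(w)} s_{\pi}$ for each $w \in \mathcal{W}_{2k}$. For the symmetric Wigner it is a standard fact (from \citet{bose2008another}) that $n^{-(1+k)}\#\Pi(w) \to 1$ when $w \in \mathcal{C}_{2k}$ and $\to 0$ otherwise. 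Since $|s_{\pi}| \leqslant 1$, the latter immediately gives $p_{\widetilde{W}}(w) = 0$ for every non-Catalan $w$, so only Catalan words can contribute and the whole problem collapses to pinning down the sign $s_{\pi}$ on the dominant circuits of a Catalan word.

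Using (\ref{eq:s}) I would first note that $s_{\pi} = 0$ as soon as $\pi$ contains a loop, and that the loop-carrying circuits form a lower-order subset of $\Pi(w)$; hence it suffices to evaluate $s_{\pi}$ on the loop-free circuits, which already exhaust $\Pi(w)$ up to $o(n^{1+k})$. For such a circuit each Wigner match $(i,j)$ satisfies $\{\pi(i-1),\pi(i)\} = \{\pi(j-1),\pi(j)\}$ with distinct endpoints, and the two associated factors $\epsilon_i,\epsilon_j$ multiply to $+1$ when the two edges are traversed in the same direction and to $-1$ when they are traversed in opposite directions. The claim I would establish is that on every such circuit of a Catalan word, all $k$ matched pairs are traversed in opposite directions, so that $s_{\pi} = (-1)^k$.

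I would prove this claim by induction on $k$ following the recursive definition of Catalan words. A Catalan word of length $2k>2$ contains a double letter at some position, say $w[j]=w[j+1]$; the match forces $\pi(j-1)=\pi(j+1)$, so edges $j$ and $j+1$ form the excursion $\pi(j-1)\rightsquigarrow \pi(j)\rightsquigarrow \pi(j-1)$ and are automatically traversed in opposite directions, contributing a factor $-1$. Deleting this double letter removes only the pendant excursion at $\pi(j)$, leaves every surviving vertex and hence every surviving edge-orientation untouched, and produces a Catalan word of length $2k-2$ whose circuit inherits $s=(-1)^{k-1}$ by the induction hypothesis; the base case $aa$ gives $s_{\pi}=-1$. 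Multiplying by the factor $-1$ just peeled off yields $s_{\pi}=(-1)^k$, as claimed.

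Combining the pieces, $p_{\widetilde{W}}(w) = (-1)^k\cdot(-1)^k\cdot \lim_n n^{-(1+k)}\#\Pi(w) = 1$ for every Catalan word and $0$ otherwise, so (\ref{eq:emt7}) gives $\lim_n \E\beta_{2k}(n^{-1/2}\widetilde{W}_n) = \#\mathcal{C}_{2k}$. Since these are exactly the semicircle moments and Carleman's condition holds, Lemma~\ref{mainlemma} finishes the proof. The main obstacle is the sign bookkeeping behind the inductive claim: one must check that deleting a double letter neither creates nor destroys level-crossings among the surviving edges, and that the non-Catalan and loop-carrying circuits are genuinely of lower order so that the crude bound $|s_{\pi}|\leqslant 1$ suffices to discard them.
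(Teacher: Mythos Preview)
Your proposal is correct and follows essentially the same route as the paper: discard non-Catalan words via $|s_\pi|\leqslant 1$, discard loop-carrying circuits as lower order, and then show $s_\pi=(-1)^k$ on loopless circuits of Catalan words by peeling off a double letter (which contributes $\epsilon_i\epsilon_{i+1}=-1$) and recursing on the shorter Catalan word. The only cosmetic difference is that the paper phrases the last step as an iteration rather than an induction, and justifies the irrelevance of loops by noting that zeroing the diagonal does not change the LSD rather than by a direct cardinality estimate.
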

\noindent 
\begin{proof} It is well known (see, e.g., \citet{bose2008another}) that for the symmetric Wigner matrix only Catalan words contribute in the limit. In fact, one has
\begin{equation}\label{eq:wig}
p_W(w)=\lim_n\frac{1}{n^{1+k}}\#\Pi^*(w)=
\begin{cases}
0, & \text{ if $w\notin \mathcal{C}_{2k}$} \\
1, & \text{ if $w \in \mathcal{C}_{2k}$}.
\end{cases}
\end{equation}
From (\ref{eq:wig}) and the fact that $|s_{\pi}|\leqslant 1$ it follows that
\begin{equation}\label{eq:skew-wig}
|p_{\widetilde{W}}(w)|
\begin{cases}
=0, & \text{if $w\in \mathcal{W}_{2k}\setminus\mathcal{C}_{2k}$} \\
\leqslant 1, & \text{if $w \in \mathcal{C}_{2k}$}.
\end{cases}
\end{equation}
We shall prove that if $w$ is a Catalan word then $p_{\widetilde{W}}(w)$ exists and equals $1$. Then (\ref{eq:emt6}) would imply that
\begin{equation}
\E\beta_{2k}(n^{-1/2}B_n)=\#\mathcal{C}_{2k},
\end{equation}
establishing the semi-circle limit for the ESD of $\{n^{-1/2}\widetilde{W}_n\}$. 

We first observe that if we  replace the diagonal entries by $0$, the LSD does not change. It follows from this observation that \emph{circuits with loops together do not have any contribution to $p_{\widetilde{W}}(w)$}. It now suffices for our purpose to prove that if $w\in\mathcal{C}_{2k}$ and $\pi\in\Pi^*(w)$, then
\begin{equation}\label{eq:spi_wig}
s_{\pi}=\begin{cases}
(-1)^k, & \text{ if $\pi$ is loopless} \\
0, & \text{ otherwise}.
\end{cases}
\end{equation}
To prove this,  suppose that a double letter appears  at the $i$-th and the $(i+1)$-th positions. Consider a loopless $\pi\in\Pi^*(w)$. Since, $w[i]=w[i+1]$, we must have
\[
L_W(\pi(i-1),\pi(i))=L_W(\pi(i),\pi(i+1)).
\]
Since $\pi$ is loopless,  it follows that we must have $\pi(i-1)=\pi(i+1)\neq\pi(i)$. There are two possibilities: either $\pi(i-1)<\pi(i)$ or $\pi(i-1)>\pi(i)$. In the first case $\epsilon_i=1$ and $\epsilon_{i+1}=-1$ while in the second case $\epsilon_i=-1$ and $\epsilon_{i+1}=-1$. In either case we have
\[
\epsilon_i\epsilon_{i+1}=-1.
\]
Now delete the double letter and think of $\pi$ as a circuit of length $2k-2$ by identifying the vertices $(i-1)$ and $(i+1)$ as identical and deleting the vertex $i$. The resulting word $w'$ is still Catalan and the resulting circuit $\pi'$ is loopless and lies in $\Pi^*(w')$. Apply the above procedure again. Clearly, we will need $k$ iterations of this procedure to empty the word $w$ and each such iteration contributes one $-1$, which proves (\ref{eq:spi_wig}) and hence the theorem.
\end{proof}

\begin{remark}
\citet{basu2012spectral} considered  upper/lower triangular versions of the Wigner, $W_n^{\Delta}$. Its LSD $\mathcal{L}_{W^{\Delta}}$ is different from the semi-circular law, but its free convolution with itself is the semi-circular law. It follows from the proof of Theorem~\ref{thm:wig} and their moment calculations that the LSD of the skew-symmetric triangular Wigner $\widetilde{W}^{\Delta}$ is again $\mathcal{L}_{W^{\Delta}}$. 
\end{remark}
\subsection{LSD of $n^{-1/2}\widetilde{T}_n$}
The LSD of the symmetric Toeplitz matrix $T_n$ was first established by \citet{bryc2006spectral}. The properties of the limit law $\mathcal{L}_T$ are not well understood. We shall consider the skew-symmetric Toeplitz $\widetilde{T}_n$ and show analogous to the Wigner case that the LSD is $\mathcal{L}_T$.   
\begin{theorem}\label{thm:toep}
Suppose the entries of the skew-symmetric Toeplitz $n^{-1/2}\widetilde{T}_n$ satisfy (A1) or (A2) or (A3). Then its LSD  is $\mathcal{L}_T$, the LSD of the symmetric Toeplitz. 
\end{theorem}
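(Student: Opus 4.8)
The plan is to follow the template of the proof of Theorem~\ref{thm:wig}: via (\ref{eq:emt6}) and (\ref{eq:emt7}) everything reduces to showing that $p_{\widetilde{T}}(w)=p_T(w)$ for every $w\in\mathcal{W}_{2k}$, since Conditions (ii) and (iii) of Lemma~\ref{mainlemma} have already been disposed of in Section~3. Because $p_T(w)=\lim_n n^{-(1+k)}\#\Pi^*(w)$ is known to exist (it is precisely the quantity computed for the symmetric Toeplitz by \citet{bryc2006spectral}), it suffices to control the signs $s_\pi$ over the circuits that dominate $\#\Pi^*(w)$ and to show that on them $s_\pi$ is the constant $(-1)^k$, exactly as in the Wigner case.

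To organize the sign computation I would introduce, for each circuit $\pi$, the slopes $d_j:=\pi(j)-\pi(j-1)$, $1\le j\le 2k$. Since $L_T(\pi(j-1),\pi(j))=|d_j|$, a pair-match $(i,j)$ forces $|d_i|=|d_j|$, i.e. $d_j=\pm d_i$; write $\epsilon_{ij}\in\{+1,-1\}$ for the corresponding sign on a loopless edge. The marking vector satisfies $\epsilon_j=\operatorname{sign}(d_j)$ (with $\epsilon_j=0$ at a loop), so $s_\pi=\prod_{j=1}^{2k}\operatorname{sign}(d_j)=\prod_{\text{matches }(i,j)}\operatorname{sign}(d_i)\operatorname{sign}(d_j)$, the product running over the $k$ matched pairs. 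A match with $\epsilon_{ij}=-1$ thus contributes a factor $-1$ and a match with $\epsilon_{ij}=+1$ contributes $+1$, giving $s_\pi=(-1)^{\#\{\text{matches with }\epsilon_{ij}=-1\}}$ for loopless $\pi$ and $s_\pi=0$ otherwise (loops kill $s_\pi$, and in any case $\widetilde{T}_n$ has zero diagonal).

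The heart of the argument is to show that the circuits counted at the leading order $n^{1+k}$ are exactly the loopless ones for which every matched pair has opposite slopes, $\epsilon_{ij}=-1$. Parametrize a circuit by $\pi(0)$ together with the $k$ free slopes $\{d_i:i\text{ a first occurrence}\}$; all remaining vertices are then determined through $\pi(j)=\pi(j-1)+d_j$ and the sign choices $\epsilon_{ij}$, so $\Pi^*(w)$ carries $k+1$ free integer parameters. The closing condition $\pi(2k)=\pi(0)$, i.e. $\sum_{j=1}^{2k}d_j=0$, reads $2\sum_{i\in I}d_i=0$, where $I$ is the set of first-occurrence edges whose match carries $\epsilon_{ij}=+1$. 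If $I=\emptyset$ (all $\epsilon_{ij}=-1$) the condition is automatic and the free parameters range over a region of positive volume, producing $\sim p_T(w)\,n^{1+k}$ loopless circuits. If $I\ne\emptyset$, the relation $\sum_{i\in I}d_i=0$ is a nontrivial linear constraint among the independent free slopes, dropping the count to $O(n^{k})$; loops likewise cost at least one dimension. Since the same dimension count governs the symmetric matrix, the all-opposite-slope loopless circuits are precisely those realizing $p_T(w)=\lim_n n^{-(1+k)}\#\Pi^*(w)$.

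Combining the two observations, on every dominant circuit all $k$ matches have $\epsilon_{ij}=-1$, so $s_\pi=(-1)^k$, while the remaining circuits contribute $o(n^{1+k})$ (or have $s_\pi=0$). Hence $\lim_n n^{-(1+k)}\sum_{\pi\in\Pi^*(w)}s_\pi=(-1)^k p_T(w)$, and therefore $p_{\widetilde{T}}(w)=(-1)^k\cdot(-1)^k\,p_T(w)=p_T(w)$. Summing over $w\in\mathcal{W}_{2k}$ in (\ref{eq:emt7}) gives $\lim\E\beta_{2k}(n^{-1/2}\widetilde{T}_n)=\sum_{w}p_T(w)$, the $2k$-th moment of $\mathcal{L}_T$, which is the claim. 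The step I expect to be the main obstacle is the leading-order count of the third paragraph: verifying cleanly that any same-slope ($\epsilon_{ij}=+1$) match genuinely forces a dimension drop, so such circuits are asymptotically negligible, and that the surviving opposite-slope circuits reproduce exactly the symmetric volume $p_T(w)$. The sign evaluation $s_\pi=(-1)^k$ on these circuits is then the direct Toeplitz analogue of the Catalan-word computation in Theorem~\ref{thm:wig}.
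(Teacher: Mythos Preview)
Your proposal is correct and follows essentially the same route as the paper. Both arguments reduce to showing that $s_\pi=(-1)^k$ on the loopless circuits whose matched pairs all have opposite slopes, and that such circuits exhaust the leading order $n^{1+k}$ of $\#\Pi^*(w)$. The only difference is packaging: the paper names this dominant subclass $\Pi^{**}(w):=\{\pi\mid w[i]=w[j]\Rightarrow s(i)+s(j)=0\}$ and simply \emph{cites} the equality $\lim n^{-(1+k)}\#\Pi^*(w)=\lim n^{-(1+k)}\#\Pi^{**}(w)$ from \citet{bose2008another}, whereas you sketch the dimension-drop argument (the nontrivial linear relation $\sum_{i\in I}d_i=0$ forced by the closing condition when $I\neq\emptyset$) that underlies that cited fact. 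So the ``main obstacle'' you flag is exactly what the paper outsources; once it is granted, the sign computation $\epsilon_i\epsilon_j=-1$ on each opposite-slope match, hence $s_\pi=(-1)^k$, is identical in both proofs.
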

\begin{proof}
Let $w\in\mathcal{W}_{2k}$ and $s(i):=\pi(i)-\pi(i-1)$. Define
\begin{equation}
\Pi^{**}(w):=\{\pi \, \mid \, w[i]=w[j]\Rightarrow  s(i)+s(j)=0\}.
\end{equation}
Then \citet{bose2008another} show that
\begin{equation}\label{eq:toep}
p_T(w)=\lim_n\frac{1}{n^{1+k}}\#\Pi^*(w)=\lim_n\frac{1}{n^{1+k}}\#\Pi^{**}(w).
\end{equation}
As in the Wigner case circuits with loops do not contribute and to establish our goal it suffices to prove that if $w\in\mathcal{W}_{2k}$ and $\pi\in\Pi^{**}(w)$, then
\begin{equation}\label{eq:spi_toep}
s_{\pi}=\begin{cases}
(-1)^k, & \text{ if $\pi$ is loopless} \\
0, & \text{ otherwise}.
\end{cases}
\end{equation}
The proof of this is much easier than the Wigner case as all the difficulty is relegated to the proof of (\ref{eq:toep})). Consider a loopless circuit $\pi \in \Pi^{**}(w)$. Note that $w[i]=w[j]$ implies that $s(i)+s(j)=0$ and since $\pi$ is loopless, we have
\[
s(i)s(j)=-s(j)^2<0.
\]
This immediately implies that
\[
\epsilon_i\epsilon_{j}=(-1)^{\mathbf{1}_{(s(i)<0)}+\mathbf{1}_{(s(j)<0)}}=-1.
\]
Since $w$ is pair-matched, there are exactly $k$ matches from each of which comes one $-1$. This establishes (\ref{eq:spi_toep}) completes the proof. 
\end{proof}

\begin{remark}
\citet{basu2012spectral} considered upper/lower triangular versions of the Toeplitz, $T_n^{\Delta}$. They proved the existence of the LSD but it could not be identified. It follows from the proof of Theorem~\ref{thm:toep} and their moment calculations that the LSD of the skew-symmetric triangular Toeplitz $\widetilde{T}^{\Delta}$ is again $\mathcal{L}_{T^{\Delta}}$, exactly paralleling the Wigner case.
\end{remark}
\begin{remark}
Recently, \citet{sen2013top} have shown that the top eigenvalue of the symmetric random Toeplitz matrix scaled by $\sqrt{n\log n}$ converges in $L_{2+\epsilon}$ to a constant when the entries have uniformly bounded $(2+\epsilon)$-th moment, $\epsilon>0$. Modifying their arguments suitably one can prove the same result for the top eigenvalue of the skew-symmetric random Toeplitz matrix.
\end{remark}
\subsection{LSDs of $n^{-1/2}\widetilde{SC}_n$ and $n^{-1/2}\widetilde{PT}_n$}
\citet{massey2007dist} defined a (symmetric) matrix to be palindromic if its first row is a palindrome. See \citet{bose2008another} for a moment method proof of the fact that the Symmetric Circulant matrix $SC_n$ and the Palindromic Toeplitz matrix  $PT_n$ have the standard Gaussian distribution on $\R$ as their LSDs. We show that the corresponding skew-symmetric versions $\widetilde{SC}_n$ and $\widetilde{PT}_n$ also have the same LSDs. 
\begin{theorem}\label{thm:symcirc+paltoep}
If the entries of  $n^{-1/2}\widetilde{SC}_n$ and $n^{-1/2}\widetilde{PT}_n$ satisfy (A1) or (A2) or (A3), then their  LSD  is $\mathcal{N}(0,1)$, the standard Gaussian distribution on $\R$. 
\end{theorem}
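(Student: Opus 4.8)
The plan is to run exactly the argument used for Theorems~\ref{thm:wig} and \ref{thm:toep}. By (\ref{eq:emt6})--(\ref{eq:emt7}) it suffices to show that $p_{\widetilde{SC}}(w)=p_{\widetilde{PT}}(w)=1$ for every $w\in\mathcal{W}_{2k}$, for then $\lim\E\beta_{2k}=\#\mathcal{W}_{2k}=(2k-1)!!$, which is precisely the $2k$-th moment of $\mathcal{N}(0,1)$ (the odd moments vanish since $F^A$ is symmetric, and Conditions (ii)--(iii) of Lemma~\ref{mainlemma} were already verified in general). I would recall from \citet{bose2008another} that for both matrices every pair-matched word contributes, $p_{SC}(w)=p_{PT}(w)=1$, and that the dominant circuits are those for which each matched pair $(i,j)$ has \emph{opposite} circular increments, i.e. $s(i)+s(j)\equiv 0\pmod{n}$ for the circulant (and the palindromic analogue $|s(i)|+|s(j)|\in\{0,n-1\}$ for $PT$), where $s(i):=\pi(i)-\pi(i-1)$; the ``same-increment'' configurations are of lower order and may be discarded. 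As before, loops contribute $0$ because $s_\pi=0$ whenever $\pi$ has a loop, so the whole task reduces to proving that $s_\pi=(-1)^k$ on this dominant loopless class.

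Here is the one genuinely new point compared with the Toeplitz case, and what I expect to be the main obstacle. In Theorem~\ref{thm:toep} the constraint $s(i)+s(j)=0$ forced one ascending and one descending step in every matched pair, so $\epsilon_i\epsilon_j=-1$ could be read off pair by pair. Now, because of the circular/palindromic folding, a dominant pair may instead ``wrap around'' with both steps in the \emph{same} direction, in which case $\epsilon_i\epsilon_j=+1$; thus $s_\pi$ can no longer be evaluated locally. The device I would use is to count descents globally: for a loopless $\pi$,
\[
s_\pi=\prod_{j=1}^{2k}\epsilon_j=(-1)^{d(\pi)},\qquad d(\pi):=\#\{\,j:\pi(j)<\pi(j-1)\,\},
\]
so that $s_\pi$ is governed by the total number of descending steps of the circuit rather than by any single pair.

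The key computation is then to show $d(\pi)=k$ on the dominant class. Within this class each matched pair $(i,j)$ is of exactly one of three kinds: it may have $s(i)+s(j)=0$ (one ascending and one descending step, contributing $1$ to $d(\pi)$); or it may be a wrap-around pair with both steps ascending (contributing $0$); or a wrap-around pair with both steps descending (contributing $2$). Writing $n_0,n_+,n_-$ for the numbers of pairs of these three kinds, one has $n_0+n_++n_-=k$ and $d(\pi)=n_0+2n_-$. Each both-ascending pair contributes a fixed positive amount $c$ to $\sum_j s(j)$ (with $c=n$ for the circulant and $c=n-1$ for the palindromic Toeplitz) and each both-descending pair contributes $-c$, while the $n_0$ balanced pairs contribute $0$; hence the closure identity $\sum_{j=1}^{2k}s(j)=\pi(2k)-\pi(0)=0$ forces $n_+=n_-$. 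Consequently
\[
d(\pi)=n_0+2n_-=n_0+n_++n_-=k,
\]
so that $s_\pi=(-1)^k$ identically on the dominant class: the wrap-arounds, though individually present, cancel out in the aggregate descent count.

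Combining these facts gives, for every $w\in\mathcal{W}_{2k}$,
\[
p_{\widetilde{SC}}(w)=p_{\widetilde{PT}}(w)=(-1)^k\lim_n\frac{1}{n^{1+k}}\sum_{\pi\in\Pi(w)}s_\pi=(-1)^k(-1)^k\,p_{SC}(w)=1,
\]
which is all that was required; summing over $\mathcal{W}_{2k}$ and invoking Lemma~\ref{mainlemma} identifies the LSD as $\mathcal{N}(0,1)$ almost surely. I would stress that the crux is precisely the wrap-around phenomenon: unlike the Wigner and Toeplitz theorems, no sign can be extracted from a single matched pair, and one must check that summing the per-pair descent counts and applying closure genuinely collapses $d(\pi)$ to the constant $k$. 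A secondary point needing care is the appeal to \citet{bose2008another} that the same-increment configurations are negligible, which is what legitimizes restricting to the opposite-increment class in the first place.
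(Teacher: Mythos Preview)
Your treatment of $\widetilde{SC}_n$ is essentially the paper's own argument, only in different notation: the paper also restricts to the dominant class $\Pi'(w)=\{\pi:w[i]=w[j]\Rightarrow s(i)+s(j)\in\{0,\pm n\}\}$, checks that a wrap-around pair has both steps of the same sign (so $\epsilon_i\epsilon_j=+1$) while a balanced pair has $\epsilon_i\epsilon_j=-1$, and then uses the closure identity $\sum_j s(j)=0$ to conclude that the number of wrap-around pairs is even. Your formulation via the descent count $d(\pi)=n_0+2n_-$ together with $n_+=n_-$ is exactly this computation rewritten; in fact $n_+=n_-$ is precisely how the paper's ``$e_\pi$ even'' is obtained.

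Where you diverge from the paper is $\widetilde{PT}_n$. The paper does \emph{not} run the moment argument for the palindromic Toeplitz at all: it instead proves an interlacing inequality for skew-symmetric matrices, observes that $\widetilde{PT}_n$ is the $n\times n$ principal submatrix of $\widetilde{SC}_{n+1}$, and concludes $\|F^{\widetilde{PT}_n}-F^{\widetilde{SC}_{n+1}}\|_\infty\le 1/(n+1)$, so the LSD is inherited from $\widetilde{SC}$. This is short and avoids any combinatorics specific to $PT$. Your direct route, by contrast, has two soft spots. First, the stated dominant constraint ``$|s(i)|+|s(j)|\in\{0,n-1\}$'' is not right: the main contributing case $s(i)+s(j)=0$ is not captured by it, and the palindromic identification actually yields the four sign-cases coming from $|s(i)|=|s(j)|$ or $|s(i)|+|s(j)|=n-1$, not a single clean wrap value. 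Second, and more seriously, for a mixed-sign palindromic pair (e.g.\ $s(i)>0$, $s(j)<0$ with $s(i)-s(j)=n-1$) the contribution to $\sum_j s(j)$ is $s(i)+s(j)=n-1+2s(j)$, which is \emph{not} a fixed constant $\pm c$; so your closure argument ``each wrap pair contributes $\pm c$, hence $n_+=n_-$'' does not go through as written. It is also worth noting that \citet{bose2008another} themselves handle $PT$ via the submatrix/interlacing route rather than by isolating a dominant class, so the appeal you make to them for the $PT$ reduction is not directly available. The interlacing argument sidesteps all of this in one line.
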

\begin{proof} We first tackle $\widetilde{SC}_n$.  
From \citet{bose2008another}, it is known that for any $w\in \mathcal{W}_{2k}$ if one defines
\begin{equation}
\Pi'(w):=\{\pi \, \mid \, w[i]=w[j]\Rightarrow  s(i)+s(j)=0,\pm n\},
\end{equation}
then one actually has
\begin{equation}
p_{SC}(w)=\lim_n\frac{1}{n^{1+k}}\#\Pi^*(w)=\lim_n\frac{1}{n^{1+k}}\#\Pi^{'}(w)=1.
\end{equation}
 Once again circuits with loops have no role to play and to prove the desired result it suffices to prove that if $w\in\mathcal{W}_{2k}$ and $\pi\in\Pi^{'}(w)$, then
\begin{equation}\label{eq:spi_symcirc}
s_{\pi}=\begin{cases}
(-1)^k, & \text{ if $\pi$ is loopless} \\
0, & \text{ otherwise}.
\end{cases}
\end{equation}
Due to the similarity with the Toeplitz link function,  the proof of the above  is similar to that in the Toeplitz case. Let $\pi$ be a loopless circuit from $\Pi'(w)$. Suppose that $w[i]=w[j]$. Then we have $s(i)+s(j)=0,\pm n$. We treat each of these three cases separately:

\begin{enumerate}
\item $s(i)+s(j)=0$. This is same as the Toeplitz case and we conclude that 
$\epsilon_i\epsilon_j=-1.$
\item $s(i)+s(j)=n$. Note that $s(i)=n-s(j)$ and since $\pi$ is loopless,
\[
|s(j)|=|\pi(j)-\pi(j-1)|\leqslant n-1.
\]
Therefore, $s(i)=n-s(j)>0$. By symmetry, $s(j)>0$. Therefore, in this case
$\epsilon_i\epsilon_j=1.$
\item $s(i)+s(j)=-n$. Note that $s(i)=-(n+s(j))$, and therefore  $s(i)$, and by symmetry $s(j)$,  are  both negative ceding $\epsilon_i\epsilon_j=1.$
\end{enumerate}
Therefore, combining the above, 
\[
s_{\pi}=(-1)^{k-e_{\pi}},
\]
where $e_{\pi}$ is the number of matches $(i,j)$ where $s(i)+s(j)=\pm n$. It suffices to show that $e_{\pi}$ is even. But note that
\[
\sum_{i=1}^{2k}s(i)=\pi(2k)-\pi(0)=0,
\]
which cannot occur unless $e_{\pi}$ is even. 
This establishes (\ref{eq:spi_symcirc}) and completes the proof for $\widetilde{SC}_n$.\vskip5pt

To prove the same for $\widetilde{PT}_n$ we take the approach of \citet{bose2008another}. We need the following version of the well known interlacing inequality. We omit its proof.\vskip5pt
 
Suppose $A$ is a real skew-symmetric matrix with eigenvalues $i\lambda_j$ with $\lambda_1\geqslant \lambda_2 \geqslant \cdots \geqslant \lambda_n$. Let $B$ be the $(n-1)\times (n-1)$ principal submatrix of $A$ with eigenvalues $i\mu_k$ with $\mu_1 \geqslant \mu_2 \cdots \geqslant \mu_{n-1}$. Then one has
\[
\lambda_1 \geqslant \mu_1 \geqslant \lambda_2 \geqslant \mu_2 \geqslant \cdots \geqslant \mu_{n-1} \geqslant \lambda_n,
\]
in other words, the imaginary parts of the eigenvalues of $B$ are interlaced between the imaginary parts of the eigenvalues of $A$.  \vskip5pt

As a consequence 
\begin{equation}\label{eq:linfty}
||F^A-F^B||_{\infty}\leqslant \frac{1}{n}.
\end{equation}
Now note that the $n\times n$ principal submatrix of $\widetilde{SC_{n+1}}$ is $\widetilde{PT}_n$. Therefore, from (\ref{eq:linfty}) we can conclude that $\widetilde{PT}_n$ also has the standard Gaussian law as its LSD.
\end{proof}
\begin{remark}
\citet{basu2012spectral} considered the  upper/lower triangular versions of the symmetric Circulant, $SC_n^{\Delta}$. They proved the existence of the LSD but it could not be identified. It follows from the proof of Theorem~\ref{thm:symcirc+paltoep} and their moment calculations that the LSD of the skew-symmetric triangular Circulant $\widetilde{SC}^{\Delta}$ is again $\mathcal{L}_{SC^{\Delta}}$.
\end{remark}

\subsection{LSD of $n^{-1/2}\widetilde{H}_n$ and $n^{-1/2}\widetilde{RC}_n$}
Simulations suggest that the LSDs of $n^{-1/2}\widetilde{H}_n$ and $n^{-1/2}\widetilde{RC}_n$ exist and are different from those of $n^{-1/2}H_n$ and  $n^{-1/2}RC_n$ respectively. See Figure \ref{fig:hankrevcirc}. We now establish this rigorously. 

In this context, \emph{symmetric} words play the key role. 
A word $w\in \mathcal{W}_{2k}$ is called symmetric if each letter in $w$ occurs
once each in an odd and an even position.  
For example, the word $aabb$ is symmetric and the word $abab$ is not.
We shall denote the set of symmetric words
of length $2k$ by $\mathcal{S}_{2k}$. 
 All Catalan
words are symmetric. An example of a non-Catalan symmetric word is $abcabc$. It is easy to prove that
\begin{equation}
\#\mathcal{S}_{2k}=k!.
\end{equation} 

\begin{figure}[!t]
\centering
\includegraphics{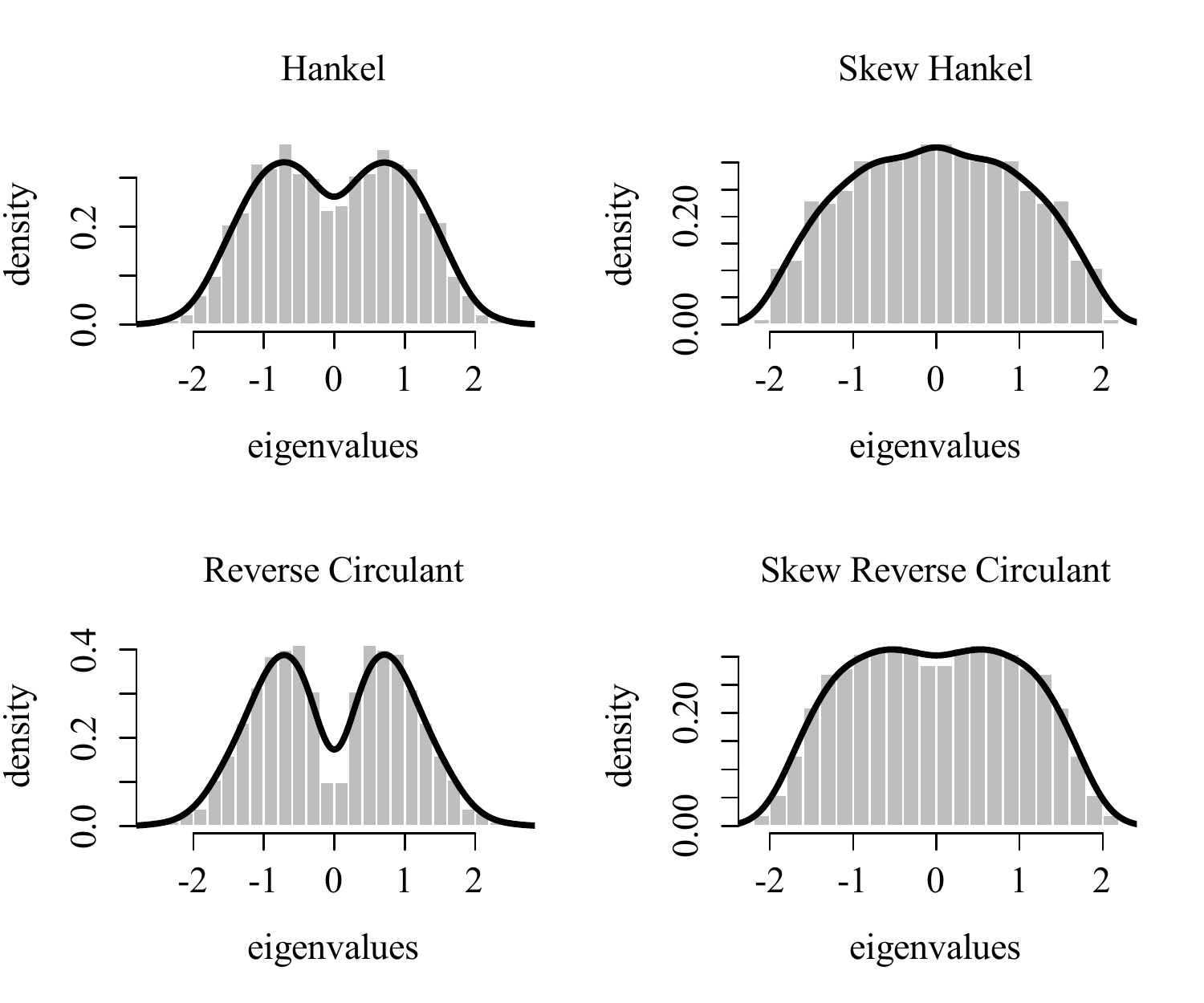}
\caption{Histograms and kernel density estimates for the ESD's of $n^{-1/2}H_n$, $n^{-1/2}\widetilde{H}_n$, $n^{-1/2}RC_n$ and $n^{-1/2}\widetilde{RC}_n$ with $n=1000$ and $\mathcal{N}(0,1)$ entries.}
\label{fig:hankrevcirc}
\end{figure}
\begin{theorem}
If the input sequence  satisfies (A1), (A2) or (A3), then the LSDs of 
$n^{-1/2}\widetilde{H}_n$ and $n^{-1/2}\widetilde{RC}_n$  exist, are universal  and are different from the LSDs of $n^{-1/2}H_n$ and  $n^{-1/2}RC_n$ respectively.
\end{theorem}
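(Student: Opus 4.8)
The plan is to run the moment method of the previous subsections through the representation \eqref{eq:emt7}, isolating the whole difference from the symmetric case in a single family of words. First I would recall from \citet{bose2008another} that for both the symmetric Hankel and the symmetric Reverse Circulant only symmetric words contribute, with $p_{RC}(w)=1$ and $0<p_H(w)\leqslant 1$ for every $w\in\mathcal{S}_{2k}$, while $p_H(w)=p_{RC}(w)=0$ for $w\notin\mathcal{S}_{2k}$. Since the bound $|s_\pi|\leqslant 1$ gives $|p_{\widetilde A}(w)|\leqslant p_A(w)$, the non-symmetric words contribute nothing to $p_{\widetilde H}$ and $p_{\widetilde{RC}}$, and it remains only to analyse $s_\pi$ on $\Pi^*(w)$ for $w\in\mathcal{S}_{2k}$.

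Both links match on \emph{sums}: $L(\pi(i-1),\pi(i))$ is a function of $\pi(i-1)+\pi(i)$ (exactly for $H$, modulo $n$ for $RC$). For a \emph{Catalan} word the deletion argument of Theorem~\ref{thm:wig} therefore applies verbatim: an innermost double letter $w[i]=w[i+1]$ forces $\pi(i-1)=\pi(i+1)$, whence $\epsilon_i\epsilon_{i+1}=-1$, and after $k$ deletions $s_\pi=(-1)^k$ on loopless circuits (loops again contribute $s_\pi=0$). Consequently each Catalan word contributes $p_{\widetilde A}(w)=(-1)^k(-1)^k p_A(w)=p_A(w)$, exactly as in the symmetric case. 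For a \emph{non-Catalan} symmetric word the two edges of a match need not point in opposite directions, so $s_\pi$ genuinely takes both values $\pm1$ over $\Pi^*(w)$; this is the only source of a new limit.

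For existence and universality I would parametrise $\Pi^*(w)$ by its $k+1$ generating vertices, rescale them by $n^{-1}$, and use that for symmetric words the $k$ matching relations have rank $k-1$ (by \citet{bose2008another}), so the remaining vertices are affine functions of the generators, read modulo $1$ in the $\widetilde{RC}$ case. On each linear piece $s_\pi=\prod_j\epsilon_j$ is, off a null set, a fixed product of indicators of strict inequalities between consecutive rescaled vertices; hence the Riemann sum $n^{-(k+1)}\sum_{\pi\in\Pi^*(w)}s_\pi$ converges to a signed volume integral over $[0,1]^{k+1}$. This establishes Condition~(i) of Lemma~\ref{mainlemma} for every symmetric word, while Conditions~(ii) and~(iii) were already verified in general using $|s_\pi|\leqslant 1$. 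Hence both LSDs exist, and they are universal across (A1)--(A3) because only pair-matched circuits survive and $\E a_\pi=1$ for them.

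It remains to show the limits \emph{differ} from the symmetric ones, for which it suffices to separate one moment. Since $\#\mathcal{S}_6=6$ and $\#\mathcal{C}_6=5$, there is exactly one non-Catalan symmetric word of length $6$, namely $w_0=abcabc$. By the second paragraph the Catalan contributions to the sixth moment agree between the symmetric and skew-symmetric versions, so the two sixth moments differ by exactly $p_{\widetilde A}(w_0)-p_A(w_0)$. Solving the rank-$2$ system for $w_0$ realises $\Pi^*(w_0)$ as a $4$-dimensional family on which the circuit admits an even and an odd number of descents, each on a set of positive measure; therefore the signed integral is strictly smaller in modulus than the unsigned volume, giving $|p_{\widetilde A}(w_0)|<p_A(w_0)$. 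In particular $p_{\widetilde{RC}}(w_0)\neq 1=p_{RC}(w_0)$ and $p_{\widetilde H}(w_0)\neq p_H(w_0)$, so the sixth moments, and hence the LSDs, genuinely differ. The main obstacle is exactly this signed count on the crossing word $w_0$: tracking the sign through the sum-matches (and, for $\widetilde{RC}$, through the wraparound $\pi(i-1)+\pi(i)\equiv\pi(j-1)+\pi(j)\pmod n$), and checking that both descent-parities occupy positive measure so that the cancellation is strict.
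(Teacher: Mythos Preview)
Your proposal is correct and follows essentially the same route as the paper: reduce to symmetric words, show Catalan words contribute identically via the deletion argument, establish existence of each $p_{\widetilde A}(w)$ by the Riemann approximation of the signed sum over generating vertices, and separate the LSDs at the sixth moment via the unique non-Catalan symmetric word $abcabc$. The only substantive difference is that the paper actually carries out the step you flag as the ``main obstacle'': rather than asserting that both descent-parities occur with positive measure, it exhibits an explicit region $U\subset[0,1]^4$ (namely $0<\nu_0<\nu_1<\nu_2<\nu_3<1$ intersected with the feasibility constraints) on which $g(\nu)=+1$, and bounds its volume below by an elementary iterated integral, thereby obtaining $p_{\widetilde H}(abcabc)<p_H(abcabc)$ directly from the sub-Hankel bound; your plan would need exactly such a concrete witness to be complete.
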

\begin{proof} We first consider the skew-symmetric Hankel. 
First suppose $w\in\mathcal{C}_{2k}$. It is known that then 
$p_H(w)=1$.
By an argument similar to that given in the proof of Theorem~\ref{thm:wig} one can show that $p_{\widetilde{H}}(w)=1$.

Now suppose $w$ is not symmetric. It is known that then 
$p_H(w)=0$.  Since, $|s_{\pi}|\leqslant 1$, it follows that for such words $p_{\widetilde{H}}(w)=0$ too.

More generally, for any pair-matched word $w$, the limit $p_{\widetilde{H}}(w)$ can be shown to exist using the same Riemann approximation technique that is used in the Hankel case (see, for example, \citet{bose2008another}). We omit the details.

We now  show that this  LSD is not same as in the symmetric Hankel case. 
 Since $|s_{\pi}|\leqslant 1$, it is clear that the limit is sub-Hankel. 
It is enough to show that $\beta_{2k}(\widetilde{H})< \beta_{2k}(H)$ for some $k\geqslant 1$. Since Catalan words contribute $1$ to both $\beta_{2k}(H)$ and $\beta_{2k}(\widetilde{H})$ and non-symmetric words do not contribute at all, we need to look a non-Catalan symmetric word. The first such word is $w=abcabc$. We shall show that $p_{\widetilde{H}}(abcabc)<\frac{1}{2}=p_{H}(abcabc)$.

 So let us consider the word $w=abcabc$ and its four generating vertices, viz., $\pi(0), \pi(1), \pi(2), \pi(3)$. Writing $\nu_i=\pi(i)/n$ and expressing the $\frac{1}{n^4}\#\Pi^*(w)$ as a Riemann sum, we know from \citet{bose2008another} that for the Hankel matrix,
\begin{equation}
p_{H}(w)=\int_{I^4} \mathbf{1}_{(0<\nu_0+\nu_1-\nu_3<1,\,  0<\nu_2+\nu_3-\nu_0<1)}d\nu_3 d\nu_2 d\nu_1 d\nu_0,
\end{equation}
where $I^4$ is the unit $4$-cube. Let $P$ be the subset of $I^4$ where the integrand above is positive. For the skew-symmetric case, however, there are many $\pi\in\Pi^*(w)$ such that $s_{\pi}=-1$, which means that there are lots of cancellations. More formally, note first that for any $\pi\in\Pi^*(w)$, we have 
\begin{eqnarray}
\nu_4=\nu_0+\nu_1-\nu_3, \\
\nu_5=\nu_2+\nu_3-\nu_0.
\end{eqnarray}
If we define
\begin{equation}
g(\nu)=s_{\pi}=(-1)^{\sum_{j=1}^{2k}\mathbf{1}_{(\nu_{j-1}<\nu_j)}},
\end{equation}
then by resorting to the Riemann approximation technique it is easy to see that
\begin{equation}
p_{\widetilde{H}}(w)=(-1)^3\int_{I^4} g(\nu)\mathbf{1}_{(0<\nu_0+\nu_1-\nu_3<1,\,  0<\nu_2+\nu_3-\nu_0<1)}d\nu_3 d\nu_2 d\nu_1 d\nu_0 .
\end{equation}
We shall show that on a subset of $P$ of positive Lebesgue measure, $g(\nu)=1$.  Consider the set $U=P\cap \{(\nu_0,\nu_1,\nu_2,\nu_3)\, \mid \, 0<\nu_0<\nu_1<\nu_2<\nu_3<1\}\subseteq I^4$. We claim that on $U$, one has $g(\nu)=1$. To see this, note that we automatically have $\nu_j-\nu_{j-1}>0$ for $j=1,2,3$. Moreover,
\begin{eqnarray}
\nu_4-\nu_3=\nu_1+\nu_0-2\nu_3<0,\\
\nu_5-\nu_4=(\nu_2-\nu_1)+2(\nu_3-\nu_0)>0,\\
\nu_6-\nu_5=2\nu_0-\nu_2-\nu_3<0.
\end{eqnarray}
Therefore, on $U$ we have, $g(\nu)=(-1)^{1+1+1+(-1)+1+(-1)}=1$. It now suffices to show that
\begin{equation}
\int_{U} \mathbf{1}_{(0<\nu_0+\nu_1-\nu_3<1,\,  0<\nu_2+\nu_3-\nu_0<1)}d\nu_3 d\nu_2 d\nu_1 d\nu_0 >0.
\end{equation}
With some easy manipulations with the constraints it is easy to show that
\begin{align*}
\int_{U} \mathbf{1}_{(0<\nu_0+\nu_1-\nu_3<1,\,  0<\nu_2+\nu_3-\nu_0<1)}d\nu & \geqslant \int_{\frac{1}{3}}^{\frac{1}{2}}\int_{\nu_0}^{\frac{1}{2}}\int_{1-\nu_1}^{\frac{1+\nu_0}{2}}\int_{\nu_2}^{1+\nu_0-\nu_2}\, d\nu_3 d\nu_2 d\nu_1 d\nu_0 \\
& =\frac{19}{62208}>0.
\end{align*}
This completes the proof for the skew-symmetric Hankel. \vskip5pt

Now consider the skew-symmetric Reverse Circulant. By following the arguments in the Hankel case, it is easy to see that each word limit exists, thereby proving the existence of the LSD. Moreover, it is known that for the Reverse Circulant, $p_{RC}(w)=1$ if $w$ is symmetric 
and is 0 otherwise. In the present case, $p_{\widetilde{RC}}(w) \leq 1$ for all symmetric words and the non-symmetric words continue to contribute zero. It is also easy to show that if $w\in \mathcal{C}_{2k}$, then $p_{\widetilde{RC}}(w)=p_{RC}(w)=1$. Thus, as before it remains to seek out a symmetric non-Catalan word $w$ such that $p(w) < 1$. Once again we may look at $w=abcabc$ and prove this. Due to the similarity with the Hankel case, we skip the details. 
\end{proof}

\section{A related class of symmetric matrices}
We have seen that skew-symmetry does not change the LSD of the Wigner, Toeplitz and the Symmetric Circulant, whereas it changes the LSD of the Hankel and the Reverse Circulant. 

Let $M_n$ be the  $n \times n$ symmetric matrix whose upper and lower triangle entries are respectively $+1$ and $-1$, the anti-diagonal consisting of $0$'s. 
Then $M_n=((m_{ij}))$ where 
\begin{equation}
m_{ij}=\begin{cases}
1, & \text{ if $i+j<n+1$,}\\
0, & \text{ if $i+j=n+1$, and}\\
-1, & \text{ if $i+j>n+1$.}
\end{cases}
\end{equation}
We show that LSD exists for the Schur-Hadamard product of $M_n$ with any of the above five matrices. For a patterned matrix $A_n$, we denote by $\widehat{A}_n$ its modified version $M_n\odot A_n$.

Note that for the Wigner and the Hankel cases, the Schur-Hadamard product is also of the same type (with a modified input sequence where the signs have changed for some elements of the sequence)--the fact that the anti-diagonal is zero does not affect the LSDs. Hence their LSDs remain unchanged due to the universality of the LSD with respect to the input variables as long as they satisfy Assumption (A1), (A2) or (A3). As we shall see, the LSD remains unchanged for the modified Reverse Circulant matrix too.

Note that $n^{-1/2}\widehat{T}_n$ and $n^{-1/2}\widehat{SC}_n$ are not Toeplitz and Symmetric Circulant matrices. We show that in each case, the LSD exists and are different from $\mathcal{L}_T$ and $\mathcal{N}(0, 1)$ respectively.
See Figure \ref{fig:rel_sym} for simulation results.
\begin{figure}[!t]
\centering
\includegraphics{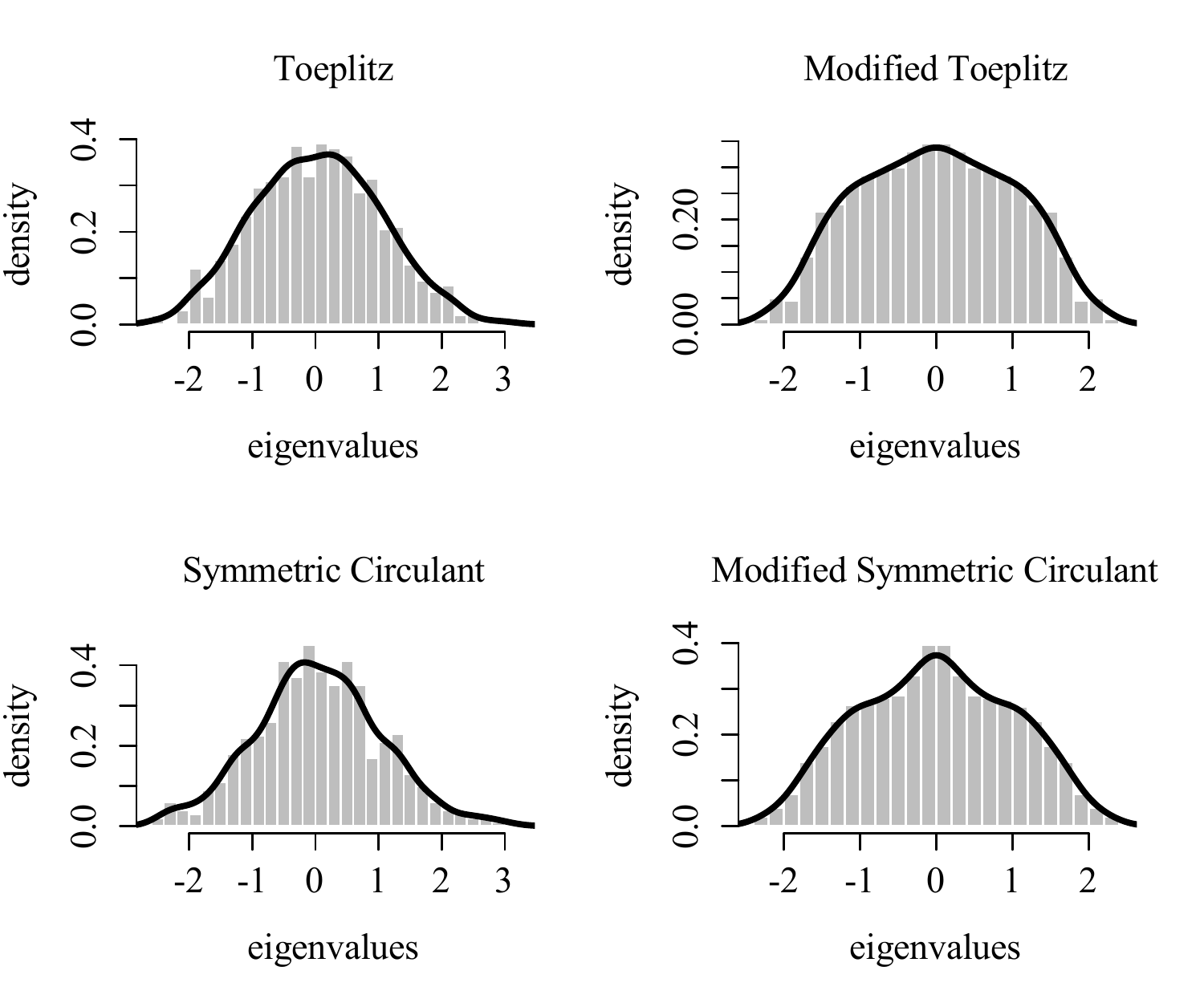}
\caption{Histograms and kernel density estmates for the ESD's of $n^{-1/2}T_n$, $n^{-1/2}\widehat{T}_n$, $n^{-1/2}SC_n$ and $n^{-1/2}\widehat{SC}_n$ with $n=1000$ and $\mathcal{N}(0,1)$ entries.}
\label{fig:rel_sym}
\end{figure}

Similar to the skew-symmetric case, define  
\begin{equation}
\epsilon_{i}=(1-\mathbf{1}_{(\pi(i-1)+\pi(i)=n+1)})(-1)^{\mathbf{1}_{(\pi(i-1)+\pi(i)>n+1)}},
\end{equation}
and
\begin{equation}
m_{\pi}=\prod_{i=1}^{n}\epsilon_i.
\end{equation}
Then we have the following analogue of (\ref{eq:emt7}):
\begin{equation}\label{eq:emt8}
\lim \E\beta_{2k}(n^{-1/2}\widehat{A}_n)=\sum_{w \in \mathcal{W}_{2k}}p_{\widehat{A}}(w),
\end{equation}
where
\[
p_{\widehat{A}}(w):=\lim_n\frac{1}{n^{1+k}}\sum_{\pi \in \Pi(w)} m_{\pi}=\lim_n\frac{1}{n^{1+k}}\sum_{\pi \in \Pi^*(w)} m_{\pi}
\]
is assumed to exist for each $w\in\mathcal{W}_{2k}$.
\subsection{LSD of $n^{-1/2}\widehat{RC}_n$}
\begin{theorem}\label{thm:modrevcirc}
If the entries of $n^{-1/2}\widehat{RC}_n$ satisfy (A1) or (A2) or (A3), then its LSD is same as the LSD of $n^{-1/2}RC_n$, i.e., $\mathcal{L}_{RC}$.
\end{theorem}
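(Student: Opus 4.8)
The plan is to run the word-by-word argument of Section~3, now with the marking $m_\pi$ in place of $s_\pi$, and to prove that for every pair-matched word $w\in\mathcal{W}_{2k}$ one has $p_{\widehat{RC}}(w)=p_{RC}(w)$. Since it is known that $p_{RC}(w)=1$ for symmetric $w$ and $0$ otherwise, this equality will force the limiting moments of $n^{-1/2}\widehat{RC}_n$ to coincide with those of $n^{-1/2}RC_n$, and Lemma~\ref{mainlemma} will then give the conclusion (Conditions (ii) and (iii) being checked verbatim as in Section~3, since $|m_\pi|\leqslant 1$). For a word $w$ that is \emph{not} symmetric nothing needs to be done: there $p_{RC}(w)=\lim_n n^{-(1+k)}\#\Pi^*(w)=0$, and $|m_\pi|\leqslant 1$ immediately forces $|p_{\widehat{RC}}(w)|\leqslant\lim_n n^{-(1+k)}\#\Pi^*(w)=0$ via (\ref{eq:emt8}).

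So fix a symmetric word $w\in\mathcal{S}_{2k}$ and set $t(i):=\pi(i-1)+\pi(i)$. Since $L_{RC}(i,j)=(i+j)\bmod n$, a match $w[i]=w[j]$ means $t(i)\equiv t(j)\pmod n$; as $2\leqslant t(i)\leqslant 2n$ this forces $t(i)-t(j)\in\{0,\pm n\}$. The analogue of a loop is now $t(i)=n+1$, which makes $\epsilon_i=0$ and hence $m_\pi=0$; after the usual rescaling $\nu_\ell=\pi(\ell)/n$ this is the measure-zero condition $\nu_{i-1}+\nu_i=1$, so by the same Riemann-sum argument underlying $p_{RC}$ the circuits carrying such a vanishing mark contribute nothing in the limit. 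For the remaining circuits every $\epsilon_i\in\{\pm1\}$, and a short case check gives $\epsilon_i\epsilon_j=+1$ when $t(i)=t(j)$ and $\epsilon_i\epsilon_j=-1$ when $t(i)-t(j)=\pm n$ (in the latter case one of $t(i),t(j)$ is $\leqslant n$ while the other is $\geqslant n+2$, so the two marks have opposite sign). Hence $m_\pi=\prod_{(i,j)}\epsilon_i\epsilon_j=(-1)^{e_\pi}$, where $e_\pi$ is the number of matches with $|t(i)-t(j)|=n$.

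The crux — and the step I expect to be the main obstacle — is to show that $e_\pi$ is always even, so that $m_\pi=+1$ on the dominant set of circuits, exactly paralleling the Symmetric Circulant computation in the proof of Theorem~\ref{thm:symcirc+paltoep}. The key identity is the telescoping sum
\[
\sum_{i=1}^{2k}(-1)^i\,t(i)=\pi(2k)-\pi(0)=0,
\]
valid because $\pi$ is a circuit. As $w$ is symmetric, each letter occupies one odd and one even position, so for every match $(i,j)$ exactly one of $i,j$ is even; that match therefore contributes $\pm\big(t(i)-t(j)\big)$ to the alternating sum. Matches with $t(i)=t(j)$ contribute $0$, while each of the $e_\pi$ matches with $|t(i)-t(j)|=n$ contributes exactly $+n$ or $-n$. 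Since the total is $0$, the number of $+n$ contributions equals the number of $-n$ contributions, whence $e_\pi$ is even and $m_\pi=1$.

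Assembling the pieces, for symmetric $w$ the sum $\sum_{\pi\in\Pi^*(w)}m_\pi$ equals the count of circuits in $\Pi^*(w)$ with no vanishing mark, which differs from $\#\Pi^*(w)$ only by the negligible mark-vanishing circuits; dividing by $n^{1+k}$ and letting $n\to\infty$ yields $p_{\widehat{RC}}(w)=p_{RC}(w)=1$. Together with the non-symmetric case this gives $\lim_n\E\beta_{2k}(n^{-1/2}\widehat{RC}_n)=\#\mathcal{S}_{2k}=k!$, the $2k$-th moment of $\mathcal{L}_{RC}$, and Lemma~\ref{mainlemma} completes the proof. The only genuinely new ingredient beyond the Symmetric Circulant argument is verifying that the sign bookkeeping of $m_\pi$ (whose $+1/-1$ split is governed by the position of $t(i)$ relative to $n+1$) is compatible with the mod-$n$ matching of $L_{RC}$, which is precisely what the parity argument for $e_\pi$ secures.
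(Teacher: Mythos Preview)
Your proof is correct and in fact cleaner than the paper's. The setup is the same: both you and the paper reduce the question to showing that $m_\pi=(-1)^{e_\pi}$ with $e_\pi$ the number of matches $(i,j)$ for which $t(i)-t(j)=\pm n$, and then aim to prove $e_\pi$ is even (up to a negligible set of circuits with some $t(i)=n+1$).

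The genuine difference is in the parity argument for $e_\pi$. The paper uses the \emph{non-alternating} identity $\sum_{i}t(i)=2\sum_i\pi(i)$, rewrites it as a sum over matches, and extracts only that $n e_\pi$ is even. This settles the case $n$ odd, but for $n$ even the paper cannot conclude directly and instead proves two auxiliary lemmas showing $q_{n+1}(w)-q_n(w)=o(1)$, so that the even-$n$ subsequence inherits the limit from the odd-$n$ one. Your argument instead exploits the \emph{alternating} telescoping identity $\sum_{i=1}^{2k}(-1)^i t(i)=\pi(2k)-\pi(0)=0$ together with the defining property of a symmetric word (each letter sits at one odd and one even position), so that each match contributes exactly $\pm\big(t(i)-t(j)\big)$ to the alternating sum. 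This forces the $\pm n$ contributions to cancel in pairs, giving $e_\pi$ even for \emph{all} $n$ in one stroke. The restriction to symmetric words is harmless because, as you note, non-symmetric words already have $p_{RC}(w)=0$ and hence $p_{\widehat{RC}}(w)=0$ by $|m_\pi|\leqslant 1$. Your route thus bypasses the subsequence machinery entirely; the paper's route, on the other hand, does not use the symmetry of $w$ and would apply verbatim to any pair-matched word, at the cost of the extra lemmas.
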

\begin{proof}
To prove this theorem, note that by (\ref{eq:emt8}) it is enough to prove that $m_{\pi}=1$ for each $\pi\in\Pi^*(w)$, for each $w\in\mathcal{W}_{2k}$.
Define 
\[
t(i)=\pi(i-1)+\pi(i)\ \ \text{and}\ \ u(i)=t(i)-(n+1).
\]
Call a circuit $\pi$ \emph{good} if  $m_{\pi}\neq0$. It is enough to consider only such circuits.

If $w[i]=w[j]$, we have 
\[
t(i)\equiv t(j) \ (\text{mod } n),
\]
which implies that $u(i) \equiv u(j) \ (\text{mod } n)$. Now note that 
\begin{equation}
-(n-1)=2-(n+1)\leqslant u(i) \leqslant n+n-(n+1)=n-1,
\end{equation}
and hence 
\begin{equation}
|u(i)-u(j)|\leqslant 2(n-1).
\end{equation}
So, we must have
\begin{equation}
u(i)-u(j)=0,\pm n.
\end{equation}
Observe that 
\begin{enumerate}
\item If $u(i)-u(j)=0$, 
then  $\epsilon_i=\epsilon_j$, which yields $\epsilon_i\epsilon_j=1$.
\item If $u(i)-u(j)=n$, then $u(i)=n+u(j)>0$, and $u(j)=u(i)-n<0$, as $|u(l)|\leqslant n-1$ for any $l$. So, in this case $\epsilon_i\epsilon_j=-1$.
\item If $u(i)-u(j)=-n$, then again $\epsilon_i\epsilon_j=-1$, by interchanging the roles of $i$ and $j$ in the previous argument.
\end{enumerate}

As a consequence
\begin{equation}
m_{\pi}=(-1)^{e_{\pi}},
\end{equation}
where $e_{\pi}$ is the number of matches $(i,j)$ in $\pi$ for which $u(i)-u(j)=t(i)-t(j)=\pm n$. Let further $e_{\pi}^+$ be  the number of matches $(i,j)$ in $\pi$ for which $t(i)-t(j)=n$ and $e_{\pi}^-=e_{\pi}-e_{\pi}^+$.
First notice that
\begin{equation}
\sum_{i=1}^{2k}t(i)=2\sum_{i=1}^{2k}\pi(i).
\end{equation}
The same sum can be written as
\begin{equation}
\sum_{(i,j)\, \text{match}}(t(i)+t(j)).
\end{equation}
Notice then that
\begin{align*} \sum_{(i,j)\, \text{match}}(t(i)+t(j)) & = \sum_{(i,j)\, \text{match}}(t(i)-t(j))+2\sum_{(i,j)\, \text{match}}t(j)\\
&=(e_{\pi}^+ - e_{\pi}^-)n + 2\sum_{(i,j)\, \text{match}}t(j)\\
&=ne_{\pi} - 2ne_{\pi}^-+ 2\sum_{(i,j)\, \text{match}}t(i).
\end{align*}
It follows from the above considerations that $ne_{\pi}$ is even always. Now suppose $n$ is odd. It then follows that $e_{\pi}$ is even and therefore $m_{\pi}=1$. The case with $n$ even seems to be more complicated. It is not clear why $e_{\pi}$ has to be even. We shall use a little trick to bypass the need to pinpoint the parity of $e_{\pi}$ in the $n$ even case. Define for $w\in \mathcal{W}_{2k}$, 
\begin{align}
& q_n(w):=\frac{1}{n^{1+k}}\sum_{\pi\in\Pi^*(w)}m_{\pi},\\
& p_n(w):=\frac{1}{n^{1+k}}\#\Pi^*(w).
\end{align}
Then it is known from \citet{bose2008another} that
\begin{equation}
p_n(w)=p_{RC}(w)+o(1),
\end{equation}
which implies, since $|q_n(w)|\leqslant |p_n(w)|$, that
\begin{equation}
q_n(w)=O(1).
\end{equation}
Now we have already proved that (as we have proved that $m_{\pi}=1$ for $n$ odd)
\begin{equation}
q_{2n+1}(w)=p_{RC}(w)+o(1).
\end{equation}
In the following lemma we shall write $\Pi^*_{n}(w)$ instead of $\Pi^*(w)$ to explicitly denote the dependence on $n$.
\begin{lemma}\label{lem1}
We have
\begin{equation}
\#\Pi^*_{n+1}(w)-\#\Pi^*_n(w)=o(n^{1+k}).
\end{equation}
\end{lemma}
\begin{proof}
We have
\begin{equation}
p_n(w)=\frac{1}{n^{1+k}}\#\Pi^*_{n}(w)=p(w)+o(1),
\end{equation}
which can be rewritten as
\begin{equation}
\#\Pi^*_n(w)=p(w)n^{1+k}+o(n^{1+k}).
\end{equation}
As a consequence
\begin{align*}
\#\Pi^*_{n+1}(w)-\#\Pi^*_n(w)&=p(w)((n+1)^{1+k}-n^{1+k})+o(n^{1+k})\\
&=p(w)O(n^k)+o(n^{1+k})\\
&=o(n^{1+k}).
\end{align*}
\end{proof}
We need another lemma.
\begin{lemma}\label{lem2}
We have
\begin{equation}
q_{n+1}(w)-q_{n}(w)=o(1).
\end{equation}
\end{lemma}
\begin{proof}
We write
\begin{align*}
&|q_{n+1}(w)-q_{n}(w)|\\
&=|\frac{1}{(n+1)^{1+k}}\sum_{\pi\in\Pi^*_{n+1}(w)}s_{\pi}-\frac{1}{n^{1+k}}\sum_{\pi\in\Pi^*_n(w)}s_{\pi}|\\
&=|\frac{1}{(n+1)^{1+k}}\sum_{\pi\in\Pi^*_{n}(w)}s_{\pi}+\frac{1}{(n+1)^{1+k}}\sum_{\pi\in\Pi^*_{n+1}(w)\setminus \Pi^*_n(w) }s_{\pi} -\frac{1}{n^{1+k}}\sum_{\pi\in\Pi^*_n(w)}s_{\pi}|\\
&\leqslant |\frac{1}{(n+1)^{1+k}}\sum_{\pi\in\Pi^*_{n}(w)}s_{\pi}-\frac{1}{n^{1+k}}\sum_{\pi\in\Pi^*_n(w)}s_{\pi}|+|\frac{1}{(n+1)^{1+k}}\sum_{\pi\in\Pi^*_{n+1}(w)\setminus \Pi^*_n(w) }s_{\pi}|\\
&\leqslant |\left(\left(\frac{n}{n+1}\right)^{1+k}-1\right)\frac{1}{n^{1+k}}\sum_{\pi\in\Pi^*_{n}(w)}s_{\pi}|+\left(\frac{n}{n+1}\right)^{1+k}\frac{1}{n^{1+k}}\#(\Pi^*_{n+1}(w)\setminus \Pi^*_n(w))\\
&=o(1)O(1)+(1+o(1))o(1) \hskip40pt \text{(by Lemma \ref{lem1})}\\
&=o(1).
\end{align*}
\end{proof}
Coming back to the original problem,  because of Lemma \ref{lem2}, now we can write
\begin{align*}
q_{2n+2}(w)&=q_{2n+1}(w)+o(1)\\
&=p_{RC}(w)+o(1).
\end{align*}
This means that
\begin{equation}
q_n(w)=p_{RC}(w)+o(1),
\end{equation}
which completes the proof of the theorem.
\end{proof}
\subsection{LSDs of $n^{-1/2}\widehat{T}_n$ and $n^{-1/2}\widehat{SC}_n$}
\begin{theorem}
If the input sequence  satisfies (A1), (A2) or (A3), then the LSDs of $n^{-1/2}\widehat{T}_n$ and $n^{-1/2}\widehat{SC}_n$  exist, are universal and are different from the LSDs of $n^{-1/2}T_n$ and  $n^{-1/2}SC_n$ respectively.
\end{theorem}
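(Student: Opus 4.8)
\section*{Proof proposal for the final theorem}

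The plan is to run the moment method through the word-level identity (\ref{eq:emt8}), reducing everything to the signed word limits $p_{\widehat{T}}(w)$ and $p_{\widehat{SC}}(w)$, exactly as was done in the skew-symmetric sections. First I would establish existence of these limits. For each $w\in\mathcal{W}_{2k}$ I would replace $\Pi^*(w)$ by the reduced set used in the symmetric analysis ($\Pi^{**}(w)$ for the Toeplitz, $\Pi'(w)$ for the Symmetric Circulant); since $|m_{\pi}|\leqslant 1$ and the symmetric difference of these sets with $\Pi^*(w)$ has $o(n^{1+k})$ elements, the signed sums over the two sets agree in the limit. On the reduced set the circuit is parametrised by its $k+1$ scaled generating vertices, and writing $\nu_i=\pi(i)/n$ one has, off a set of measure zero, $m_{\pi}\to g_w(\nu):=(-1)^{\#\{i\,:\,\nu_{i-1}+\nu_i>1\}}$, because $\pi(i-1)+\pi(i)>n+1$ passes to $\nu_{i-1}+\nu_i>1$ while the anti-diagonal event $\pi(i-1)+\pi(i)=n+1$ is asymptotically negligible. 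Thus the signed normalised count is a Riemann sum converging to $\int_{R_w}g_w(\nu)\,d\nu$, where $R_w$ is precisely the region whose volume is $p_T(w)$ (resp.\ $p_{SC}(w)$); as $g_w$ is piecewise constant with jump set of measure zero, the limit $p_{\widehat{A}}(w)$ exists. Conditions (ii) and (iii) of Lemma~\ref{mainlemma}, together with universality over (A1)--(A3), then follow verbatim from the skew-symmetric argument, again because $|m_{\pi}|\leqslant 1$.

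Next I would record the global bound and the Catalan computation. From $|m_{\pi}|\leqslant 1$ one gets $|p_{\widehat{A}}(w)|\leqslant p_A(w)$ for every $w$, whence $\beta_{2k}(\widehat{A})=\sum_w p_{\widehat{A}}(w)\leqslant\sum_w p_A(w)=\beta_{2k}(A)$. To show Catalan words do not separate the two laws, I would prove $m_{\pi}=1$ for every loopless $\pi$ coming from a Catalan word. If a double letter sits at positions $i,i+1$, the matching condition forces $\pi(i+1)=\pi(i-1)$ (the $\pm n$ alternatives in the Symmetric Circulant case are impossible since consecutive levels differ by at most $n-1$); but then $\epsilon_i$ and $\epsilon_{i+1}$ depend on the \emph{same} sum $\pi(i-1)+\pi(i)$ and hence are equal, so $\epsilon_i\epsilon_{i+1}=1$. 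Deleting the double letter preserves every remaining $\epsilon_j$, so iterating the Catalan reduction $k$ times yields $m_{\pi}=1$; consequently $p_{\widehat{A}}(w)=p_A(w)$ on $\mathcal{C}_{2k}$.

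The heart of the matter, and the step I expect to be the main obstacle, is exhibiting a \emph{strict} inequality. Since Catalan words contribute equally and every word obeys $|p_{\widehat{A}}(w)|\leqslant p_A(w)$, it suffices to find one non-Catalan word on which the sign weighting strictly reduces the contribution; the natural candidate is $w=abab$, the smallest non-Catalan pair-matched word. For the Toeplitz, $p_T(abab)=\tfrac23$, and with $\nu_3=\nu_0+\nu_2-\nu_1$ one has $p_{\widehat{T}}(abab)=\int_R g_w(\nu)\,d\nu_0\,d\nu_1\,d\nu_2$ over $R=\{\nu_0,\nu_1,\nu_2\in[0,1],\ \nu_3\in[0,1]\}$. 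Writing this as $\tfrac23-2\,\mathrm{meas}\!\left(\{g_w=-1\}\cap R\right)$, I would exhibit an interior point of $R$, for instance $(\nu_0,\nu_1,\nu_2)=(0.9,0.5,0.35)$ giving $\nu_3=0.75$, where three of the four sums $\nu_{j-1}+\nu_j$ exceed $1$ so that $g_w=-1$, and which is bounded away from all the hyperplanes $\nu_{j-1}+\nu_j=1$; a whole neighbourhood then carries $g_w=-1$, forcing $p_{\widehat{T}}(abab)<\tfrac23$ and hence $\beta_4(\widehat{T})<\beta_4(T)$. The Symmetric Circulant is handled in the same way: here $p_{SC}(abab)=1$, and an analogous positive-measure cancellation set gives $p_{\widehat{SC}}(abab)<1$ and $\beta_4(\widehat{SC})<3$, the only extra care being the $s(i)+s(j)=\pm n$ branches entering the description of $R$. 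Since the limiting moment sequences are sub-Gaussian and satisfy Carleman's condition, one differing moment is enough to conclude that the LSDs exist, are universal, and differ from $\mathcal{L}_T$ and $\mathcal{N}(0,1)$ respectively.
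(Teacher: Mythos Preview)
Your proposal is correct and follows essentially the same route as the paper: existence via the Riemann approximation for the signed word limits $p_{\widehat{A}}(w)$, equality on Catalan words by the double-letter reduction (your observation that $\pi(i+1)=\pi(i-1)$ forces $\epsilon_i=\epsilon_{i+1}$ and hence $\epsilon_i\epsilon_{i+1}=1$ is exactly the point), and strict inequality via the word $abab$ by exhibiting a positive-measure region where the sign function equals $-1$. The paper in fact computes $p_{\widehat{T}}(abab)=2/9$ in \texttt{Mathematica}, but your qualitative argument with the explicit interior point $(\nu_0,\nu_1,\nu_2)=(0.9,0.5,0.35)$ is sufficient and arguably cleaner, and your deduction $\beta_4(\widehat{T})<\beta_4(T)$ from $p_{\widehat{A}}(w)\leqslant |p_{\widehat{A}}(w)|\leqslant p_A(w)$ together with equality on $\mathcal{C}_4$ is exactly right.
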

\begin{proof}
We shall outline the proof only for $n^{-1/2}\widehat{T}_n$. The proof for $n^{-1/2}\widehat{SC}_n$ is similar and is omitted. 

Once again the existence of the LSD, say $\mathcal{L}_{\widehat{T}}$,  may be proven using the Riemann approximation technique.  We show that $\mathcal{L}_{\widehat{T}}$ does not equal $\mathcal{L}_T$. As in the proof of Theorem~\ref{thm:wig} we can show that for each Catalan word $w$, $p_{\widehat{T}}(w)=1=p_{T}(w)$. Thus we need to look at a non-Catalan pair-matched word. The first such word is $w=abab$.  We shall show that  $p_{\widehat{T}}(abab)\neq p_{T}(abab)=2/3$, which would conclude proof. Using the Riemann approximation argument it is easy to show that
\[
p_{\widehat{T}}(w)=\int_{I^3} (-1)^{\sum_{i=1}^4\mathbf{1}_{(\nu_i+\nu_{i-1}>1)}}\mathbf{1}_{(0\leqslant \nu_0-\nu_1+\nu_2 \leqslant 1)} d\nu_2 d\nu_1 d\nu_0,
\]
where $\nu_3=\nu_0-\nu_1+\nu_2$ and $\nu_4=\nu_0$. Now similar to the skew-symmetric Hankel case one can show that on a subset of positive Lebesgue measure the integrand above is negative. In fact, a calculation in \texttt{Mathematica} reveals that $p_{\widehat{T}}(abab)=2/9$. This proves the theorem completely.
\end{proof}  
\bibliographystyle{apalike}
\bibliography{mybib}
\end{document}